\theoremstyle{plain}
\newtheorem{thm}{Theorem}[section]
\newtheorem{cor}[thm]{Corollary}
\newtheorem{prop}[thm]{Proposition}
\theoremstyle{definition}
\newtheorem{dfn}[thm]{Definition}
\definecolor{blue}{rgb}{0,0,0}
\def\dnfo{\;\raise.2em\hbox{$\mathrel|\kern-.9em\lower.4em\hbox
{$\smile$}$}}
\def\dnf#1{\lower.9em\hbox{$\buildrel\dnfo\over{ \scriptstyle  #1}$}}
\def\dfo{\;\raise.2em\hbox{$\mathrel|\kern-.9em\lower.4em\hbox{$\smile$}
\kern-.72em\lower.07em\hbox{\char'57}$}\;}
\def\df#1{\lower1em\hbox{$\buildrel\dfo\over{\scriptstyle #1}$}}
\newcommand{\bN}{\mathbb{N}}
\newcommand{\real}{\mathbb{R}}
\newcommand{\rat}{\mathbb{Q}}
\newcommand{\comp}{\mathrm{comp}}
\newcommand{\halt}{\downarrow}
\begin{document}

\title[Normality, relativization, and randomness]{ Normality, relativization, and randomness}


\author*[1]{\fnm{Wesley} \sur{Calvert}}\email{wcalvert@siu.edu}

\author[2]{\fnm{Emma} \sur{Gruner}}\email{eeg67@psu.edu}

\author[3]{\fnm{Elvira} \sur{Mayordomo}}\email{elvira@unizar.es}

\author[4]{\fnm{Daniel} \sur{Turetsky}}\email{dan.turetsky@vuw.ac.nz}

\author[5]{\fnm{Java Darleen} \sur{Villano}}\email{javavill@uconn.edu}

\affil*[1]{\orgdiv{School of Mathematical and Statistical Sciences}, \orgname{Southern Illinois University}, \orgaddress{\street{Mail Code 4408}, \city{Carbondale}, \postcode{62918}, \state{Illinois}}}

\affil[2]{\orgdiv{Department of Mathematics}, \orgname{Penn State University}, \orgaddress{\city{University Park}, \postcode{16802}, \state{Pennsylvania}}}

\affil[3]{\orgdiv{Departamento de Inform{\'a}tica e Ingenier{\'\i}a
de Sistemas, Instituto de Investigaci{\'o}n en Ingenier{\'\i}a de
Arag{\'o}n}, \orgname{Universidad de Zaragoza}, \country{Spain}}

\affil[4]{\orgdiv{School of Mathematics and Statistics}, \orgname{Victoria University of Wellington}, \city{Wellington},  \country{New Zealand}}

\affil[5]{\orgdiv{Department of Mathematics}, \orgname{University of Connecticut}, \orgaddress{\city{Storrs}, \postcode{06269}, \state{Connecticut}}}


\date{\today}

\keywords{Normal number, algorithmic randomness, effective dimension, Kolmogorov Complexity}


\abstract{
Normal numbers were introduced by Borel and later proven to be  a weak notion of algorithmic randomness.  We introduce here a natural  relativization of normality based on generalized number representation systems.  We explore the concepts of supernormal numbers that correspond to semicomputable relativizations, and that of highly normal numbers  in terms of computable ones.

We prove several properties of these new randomness concepts.
Both supernormality and high normality generalize Borel absolute normality. Supernormality is strictly between 2-randomness and effective dimension 1, while high normality corresponds exactly to sequences of computable dimension 1 providing a more natural characterization of this class.

}

\maketitle

\section{Introduction}

Normal numbers were introduced by Borel in \cite{Borel1909}, and used as an early model of randomness in \cite{Popper1935}.  Normality is certainly a weak notion of randomness since a number is $b$-normal if it is random for all finite-state gambling machines \cite{BoHiVi05}.
We say that a number $x$ is \emph{normal to base $b$} if and only if, in the base $b$ expansion of $x$, every block of digits of a given length occurs with the same limiting frequency.  This property is heavily sensitive to the base $b$; the same number may be normal to one base but not normal to another.  It is customary, then, to strengthen normality in this way: a number $x$ is said to be \emph{absolutely normal} if and only if it is normal to every base.  This is still a weak notion of randomness since examples of computable absolutely normal numbers are given in \cite{Turi13, Becher2002, LuMa21}.  A recent reference on normal numbers is \cite{Bugeaud2012}.

In the present paper, we introduce a natural relativization of normality through general number representation systems that extend the representation of real numbers in a fixed base. 
Certainly the standard representation of a real number $x$ in base $b$ as \[x = \sum\limits_{i\in \mathbb{Z}} a_i b^{i},\] where only finitely many of the $a_i$ with positive $i$ are nonzero, is a useful representation, but it is not the only one imaginable, and it is, after all, a standard observation that an appropriate choice of data structure can radically change the complexity of a computational problem.  We consider, then, alternate representations of real numbers.  Let $\Sigma_b=\{0, \ldots, b-1\}$.  A system for representing real numbers is given by a function $f:\Sigma_b^{<\omega} \to \rat$, where the domain is interpreted as strings naming a number and the codomain is interpreted as the represented rational numbers.  A real number $x$ is named by a sequence of strings $(\sigma_i : i \in \mathbb{N})$ such that $\lim\limits_{i \to \infty} f(\sigma_i) = x$.  By varying the effectiveness of $f$ we can then obtain alternative concepts to Borel normality.

We explore here the cases of semicomputable and computable number representation systems obtaining
the new concepts of supernormal and  highly normal numbers.  Just as absolutely normal numbers are numbers whose normality is robust to change of base, these numbers are those whose normality is, in different senses, robust to all reasonable changes of representation.

We compare supernormality and high normality with existing randomness notions, showing that supernormal numbers are strictly between 2-randomness and effective dimension 1, and that 
highly normal numbers are exactly those of computable dimension 1, which can be a more natural characterization of this interesting class.

\subsection{Preliminaries}


Fix $U$ a  universal {\color{blue} prefix-free} Turing machine and let $K$ be the {\color{blue} prefix-free} Kolmogorov complexity defined for each $w\in 2^{<\omega}$ by \[K(w) =\min\{|p| : U(p)=w\}.\]

For other {\color{blue} prefix-free} Turing machines $M$, \[K_M(w) =\min\{|p| : M(p)=w \ \mathrm{or}\ p=w \}.\]

{\color{blue} Notice that we can relatize $K$ (and $K_M$) by allowing the (oracle) Turing machine to access an oracle $A$. that is, $K^A(w) =\min\{|p| : U^A(p)=w\}$.}


{\color{blue}

For each} $x\in [0,1]$, we identify $x$ with the binary sequence in $2^{\omega}$ corresponding to the binary representation of $x$. {\color{blue} We denote with $\overline{x}$ or the {\sl complement of $x$\/} the real number with binary representation complementary to that of $x$, that is, $\overline{x}=1-x$.  }

We next define the {\sl effective dimension\/} of a number.
\begin{dfn}\cite{KCCCHD} Let $x\in 2^{\omega}$. The \textit{dimension} of $x$ is
\[\dim(x)=\liminf_n \frac{K(x[1..n])}{n}.\]
\end{dfn}


{\color{blue} $n$-dimension is the relativization of the last concept.

\begin{dfn} Let $x\in 2^{\omega}$, $n\in\bN$. The \textit{$n$-dimension} of $x$ is
\[n-\dim(x)=\liminf_i \frac{K^{\emptyset^{n-1}}(x[1..i])}{i}.\]
\end{dfn}

Lutz introduced computable dimension in \cite{DCC}. We use here the definition below following from \cite{Hitch05}\ and that is also in terms of Kolmogorov complexity. 
}

\begin{dfn} Let $x\in 2^{\omega}$. The \textit{computable dimension} of $x$ is
\[\dim_{\comp}(x)=\inf\limits_{M}\left\{\liminf_n \frac{K_M(x[1..n])}{n}\right\},\] where $M$ ranges over all Turing machines which halt on every natural number.
\end{dfn}

{\color{blue}

Let $x\in [0,1]$, let $b\in\bN$, we denote as $seq_b(x)$  the base-$b$ representation of $x$.  For  $\sigma\in \Sigma_b^{<\omega}$ we identify $\sigma$ with the rational  $0.\sigma$ that has $\sigma$ as its finite representation in base $b$.

For the sake of completeness we include here the definition of Borel normal number.

\begin{dfn}[\cite{Borel1909}] Let $x\in[0,1]$, let $b\in\bN$. $x$ is {\sl $b$-normal\/} if for every $m\in\bN$ and every $w\in \Sigma_b^{m}$, the asymptotic, empirical frequency of $w$ in  $seq_b(x)$  is $b^{-m}$, that is
\[\lim_n \frac{|\{i\le n\,|\,seq_b(x)[i-m+1..i]=w\}}{n}= b^{-m}.\]
 $x$ is {\sl absolutely normal\/} if $x$ is {$b$-normal} for every $b\in\bN$.

\end{dfn}

{\color{blue}

We also include the definition of $n$-randomness. 

Let $\mu$ be Lebesgue measure on $2^{\omega}$. For $A\subseteq 2^{<\omega}$ we denote with $\mu(A)=\mu(\{x \,|\, \exists i \ x[1..i]\in A\})$. 

\begin{dfn} $x\in[0,1]$, $n\in\bN$. $x$ is $n$-random if for every  $U_k\subseteq 2^{<\omega}$ that is uniformly computably enumerable relative to $\emptyset^{n-1}$ and such that  for every $k$ 
$\mu\left(U_{k}\right) < 2^{-k}$, $x$ is not covered by $U_k$, that is, there is a $k_0$ such that for all $i$ $x[1..i]\not\in U_{k_0}$.

\end{dfn}

Finally we recall the definition of Chaitin's omega.

\begin{dfn}  Let $U$ be a universal Turing machine. Then Chaitin's omega is $\Omega_U =
\sum\limits_{U(\sigma)\halt} 2^{-|\sigma|}$.
When $U$ is fixed we use the notation $\Omega$.

\end{dfn}

}

}


\subsection{Finite-state characterizations of Borel normality}


\begin{dfn}  
A \textit{finite state (FS) $\Sigma_b$-transducer} $D$ is a tuple $D=(Q,q_0, \delta, out)$ such that $Q$ is a finite set of states, $q_0\in Q$ is the initial state, $\delta: Q\times  \Sigma_b \to  Q$ is the transition function, and $out: Q\times \Sigma_b \to  \Sigma_b^{<\omega}$  is the output function. 
\end{dfn}

There is a natural notion of (plain) Kolmogorov complexity with respect to a transducer.

\begin{dfn} Let $b\in\bN$, let $D$
be a FS $\Sigma_b$-transducer, $\sigma\in \Sigma_b^{<\omega}$ (which is the finite representation in base $b$ of the rational denoted $0.\sigma$), $n\in\bN$, and $x\in [0,1]$, \[C_{D}(\sigma) := \min\{|p|: D(p)=\sigma\}\]
\[C_{n,D}(x) := \min\{C_D(\sigma): |0.\sigma-x| < b^{-n}\}\cup\{n+1\}.\]
Notice that if $\sigma$ is not a $D$-output, then $C_{D}(\sigma)=\infty$.
\end{dfn}


{\color{blue}
\begin{dfn} Let $b\in\bN$, a {\sl $\Sigma_b$-martingale\/} $d$ is a function $d: \Sigma_b^{<\omega}\to [0, \infty)$ such that for every $w\in \Sigma_b^{<\omega}$,
\[d(w) = \frac{\sum_{a\in\Sigma_b} d(wa)} {b^{|w|}}\]
where $|w|$ is the length of $w$.

A {\sl FS-computable $\Sigma_b$-martingale\/} $d$ is a $\Sigma_b$-martingale such that there is a FS-transducer that on every input $w$ outputs $d(w)$ (via the identification of  $\sigma\in \Sigma_b^{<\omega}$ with the rational  $0.\sigma$).

\end{dfn}}

Borel normality can be characterized both in terms of finite-state martingale success and in terms of  finite-state compressibility. While in terms of martingales randomness (that is, no infinite success) and dimension one (that is, no linear exponential success) coincide, the same is probably false for the case of compressibility (for instance, for the  Lempel-Ziv algorithm, which compresses more efficiently than any finite state machine, randomness or maximal incompressibility is different from dimension one or compressibility ratio 1).

\begin{thm}[Characterization Theorem]\label{CharacterizationTheorem} Let $x\in [0,1]$, let $b\in\bN$, let $h:\mathbb{N}\to\mathbb{N}$ be such that $\lim_n h(n)=\infty$ and $h(n)=o(n)$. 
The following are equivalent:
\begin{enumerate}
\item $x$ is normal to a base $b$.
\item For each FS-computable $\Sigma_b$-martingale $d$, for every $\epsilon>0$ and almost every $n$, we have \[d(seq_b(x)[1..n])<2^{\epsilon n}.\]
\item For each FS-computable $\Sigma_b$-martingale $d$, there is an $\epsilon>0$ and $c>0$ such that for almost every $n$,  we have either \[d(seq_b(x)[1..n])<2^{-\epsilon n}\] or else \[d(seq_b(x)[1..n])=c.\] 
\item For each FS-computable $\Sigma_b$-martingale $d$ and almost every $n$,  we have \[d(seq_b(x)[1..n])<2^{h(n)}.\]
\item For each FS $\Sigma_b$-transducer $D$, for every $\epsilon>0$, and for almost every $n$, we have \[C_{D}(seq_b(x)[1..n])>n (1-\epsilon).\]
\item For each FS $\Sigma_b$-transducer $D$, for every $\epsilon>0$, and for almost every $n$, we have \[C_{n,D}(x)>n (1-\epsilon).\]
\end{enumerate}
\end{thm}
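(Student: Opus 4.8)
The plan is to prove the cycle of implications $(1)\Rightarrow(2)\Rightarrow(3)\Rightarrow(4)\Rightarrow(1)$ among the martingale conditions, the equivalence $(5)\Leftrightarrow(6)$ among the compression conditions, and then bridge the two families via $(2)\Rightarrow(5)$ and $(5)\Rightarrow(1)$ (or some equally convenient bridge). Most of these are folklore in the finite-state context, so the bulk of the work is in assembling references (Schnorr–Stimm, Bourke–Hitchcock–Vinodchandran, Becher–Heiber, Dai–Lathrop–Lutz–Mayordomo) and checking that the classical arguments, which are usually stated for infinite-state-success versus no-success, go through verbatim when "success" is replaced by "exponential success $2^{\epsilon n}$" versus "subexponential growth".

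For the martingale implications: $(1)\Rightarrow(2)$ is the compressibility/normality direction of the Schnorr–Stimm theorem, phrased via the standard fact that a finite-state martingale achieving $d(w)\ge 2^{\epsilon|w|}$ infinitely often can be converted into a finite-state gambler that wins on $x$, contradicting normality. For $(4)\Rightarrow(1)$ one runs the contrapositive: if $x$ is not normal to base $b$, some block $u$ of length $k$ occurs with deviant frequency, and the classical $k$-th order frequency martingale $d_u$ — a finite-state object — grows like $2^{\Omega(n)}$, which swamps $2^{h(n)}$ since $h(n)=o(n)$; this is where the hypotheses $\lim_n h(n)=\infty$ and $h(n)=o(n)$ are actually used. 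The internal implications $(2)\Rightarrow(3)$ and $(3)\Rightarrow(4)$ are soft: $(3)\Rightarrow(4)$ is immediate since both alternatives in $(3)$ are $o(2^{h(n)})$ once $h\to\infty$; for $(2)\Rightarrow(3)$ one uses the savings-account trick (split $d$ into a part that is held constant once it reaches a threshold and a residual part that one shows tends to $0$ subexponentially), which is the standard way to upgrade "no exponential success" to the dichotomy in $(3)$.

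For the compression side, $(5)\Leftrightarrow(6)$ is essentially a definitional unwinding: $C_{n,D}(x)$ is the minimum of $C_D(\sigma)$ over $\sigma$ with $|0.\sigma-x|<b^{-n}$, and the prefix $seq_b(x)[1..n]$ is (up to an additive constant absorbed into $\epsilon$) an optimal such $\sigma$, so the two quantities differ by $O(1)$ uniformly; the $+(n+1)$ clause in the definition of $C_{n,D}$ only matters when no $\sigma$ is a $D$-output and is harmless for the inequality $>n(1-\epsilon)$. Finally, $(2)\Leftrightarrow(5)$ is the finite-state Kraft/Levin correspondence between finite-state transducers and finite-state martingales — compressibility ratio $<1$ along $x$ is equivalent to finite-state exponential success along $x$ — which is precisely the content of the finite-state dimension literature (Dai–Lathrop–Lutz–Mayordomo); one direction builds a martingale from a transducer by betting according to the conditional codeword-length distribution, the other builds a transducer by arithmetic-coding the martingale.

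The main obstacle is the bridge $(2)\Leftrightarrow(5)$: finite-state transducers need not be prefix-free or even injective, so the clean Kraft-inequality bookkeeping available for prefix machines has to be replaced by the more delicate finite-state information-lossless analysis, and one must be careful that the state blow-up in going from a martingale to a transducer (and back) stays finite — the auxiliary states needed to track the martingale's current capital to the required precision are bounded because only the ratio, not the exact value, is relevant, but making this precise is the one place a genuine construction, rather than a citation, may be needed.
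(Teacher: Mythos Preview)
Your proposal is correct and in fact considerably more detailed than the paper's own ``proof,'' which consists of a single sentence deferring to the literature: it cites Schnorr--Stimm, the finite-state dimension paper of Dai--Lathrop--Lutz--Mayordomo, Doty--Moser, and a related finite-state-dimension source, without spelling out which implication comes from where. Your outline is thus entirely in the same spirit --- assemble known equivalences --- and identifies essentially the same sources.

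One small comment on your internal route: the implication $(2)\Rightarrow(3)$ via a savings-account argument is a slightly awkward choice. The dichotomy in $(3)$ (exponential decay or eventual constancy) is precisely the content of the Schnorr--Stimm theorem and is most naturally obtained \emph{directly} from normality $(1)$, by analyzing the behavior of a finite-state gambler on a sequence whose block frequencies are correct; the savings-account construction does not obviously preserve finite-stateness, and in any case is designed for a different kind of upgrade. A cleaner cycle is $(1)\Rightarrow(3)\Rightarrow(2)\Rightarrow(4)\Rightarrow(1)$, with $(3)\Rightarrow(2)$ trivial. This is a minor reorganization and does not affect the overall correctness of your plan.
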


\begin{proof}
{\color{blue}Proposition 4.1 (Satz 4.1) in \cite{SchSti72}\  is the celebrated Schnorr-Stimm dichotomy theorem of finite-state martingales, stating that for each infinite sequence $z$, if $z$ is normal then for each FS-computable martingale  $d$ there is an $\epsilon>0$ and $c>0$ such that for almost every $n$,  $d(z[1..n])<2^{-\epsilon n}$ or else $d(z[1..n])=c$; and if $z$ is not normal then there is 
a FS-computable martingale $d$ and $\epsilon>0$ such that 
$d(z[1..n])>2^{(1+\epsilon) n}$ for infinitely many $n$.

Using Proposition 4.1 in \cite{SchSti72}\ we have that (1)-(4) are all equivalent.

\cite{FSD}\ proves, for any sequence $z$, the equivalence of (2)  (for each FS-computable $\Sigma_b$-martingale $d$, for every $\epsilon>0$ and almost every $n$, $d(z[1..n])<2^{\epsilon n}$) with the property of $z$ being incompressible by finite-state information-lossless compressors.

\cite{DotMos06}\ proves the equivalence of $z$ being incompressible by finite-state information-lossless compressors with (5), that is, for each FS $\Sigma_b$-transducer $D$, for every $\epsilon>0$, and for almost every $n$, $C_{D}(z[1..n])>n (1-\epsilon)$.

Therefore (2) is equivalent to (5).

Finally \cite{PSPFSD}\ proves the equivalence of (5) and (6).

}
\end{proof}

\subsection{Relativizing Normality to Other Representation Systems}

{\color{blue} In order to relativize normality, let us start by reflecting on the oracle use required in the definition of relativized effective dimension, $\dim^A(x)$. The following result  characterizes dimension in terms of Kolmogorov complexity at a certain precision.

\begin{dfn}[\cite{DPSSF}] Let $x\in [0, 1]$, $r\in\bN$. 
\[ K_r(x)= \min\{|p| : |U(p)-x|<2^{-r}\}.\]
\end{dfn}

\begin{thm}[\cite{DPSSF}] Let $x\in[0,1]$. Then
\[\dim(x)=\liminf_r \frac{K_r(x)}{r}.\]
\end{thm}
Now let us think about  $\dim^A(x)$, which is based on $K_r^A(x)$, that is, on the enumeration of real numbers given by $U^A(p)$ as $p$ ranges over $2^{\omega}$. The main role of the oracle $A$ in $K_r^A$ is to provide a representation system for real numbers in  $[0, 1]$ through this enumeration, $f(p)= U^A(p)$.
}

While the standard definition of normality given above is not naturally relativizable,  Theorem \ref{CharacterizationTheorem} gives us several other options.  We first describe how  Kolmogorov complexity can be relativized to a representation system.

\begin{dfn} Let $b\in\bN$, $f: \Sigma_b^{<\omega}\to \rat $, $D$
be a FS $\Sigma_b$-transducer,  $n\in\bN$, and $x\in [0,1]$. Then we define $C_n^f, C_{n,D}^f:[0,1]\to\mathbb{N}$ by
\[C^f_{n}(x) := \min(\{|\sigma|: |f(\sigma)-x| < b^{-n}\}\cup\{n+1\}),\]
\[C^f_{n,D}(x) := \min(\{C_D(\sigma): |f(\sigma)-x| < b^{-n}\}\cup\{n+1\}).\]
\end{dfn}

We now apply Clauses 5 and 6 of Theorem \ref{CharacterizationTheorem} to propose a relativized notion of normality.

\begin{dfn} Given a function $f: \Sigma_b^{<\omega}\to \rat$, we say that a real $x$ is \textit{strongly $f$-normal} if and only if for every finite state machine $D$ and every $\epsilon>0$, for almost every $n$, we have  \[C^f_{n,D}(x) \geq n(1-\epsilon).\] \end{dfn}

A minimum check on such a definition is that a real which is strongly $f$-normal for some appropriate choice of $f$ should be normal.  Indeed, we can let $f_b$ be the function from $\Sigma_b^{<\omega}$ to $[0,1]$ which returns the numerical value of a string in $\Sigma_b^{<\omega}$ (under the usual $b$-ary representations). Then Theorem \ref{CharacterizationTheorem} says exactly that every real which is strongly $f_b$-normal is normal to base $b$.

We could also imagine another formulation omitting the transducer $D$.  

\begin{dfn} Given a  function $f$, we say that a real $x$ is \textit{weakly $f$-normal} if and only if for every $\epsilon>0$, for almost every $n$, we have  \[C^f_{n}(x) \geq n(1-\epsilon).\]\end{dfn}

As with the ``strong" definition, we would hope that every normal number is weakly $f$-normal for an appropriate function $f$.  Using the {\color{blue} usual base-$b$} representation $f_b$ as before, Theorem \ref{CharacterizationTheorem} shows that every real which is normal to base $b$ is weakly $f_b$-normal.

\begin{prop} For every function $f: \Sigma_b^{<\omega}\to \rat$, every real number $x$ which is strongly $f$-normal must also be weakly $f$-normal.  On the other hand, there is a function $f$ and a real $x$ such that $x$ is weakly $f$-normal but not strongly $f$-normal.\end{prop}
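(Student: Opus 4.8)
I would prove the two halves separately; the inclusion $\mathrm{SNorm}^f\subseteq\mathrm{WNorm}^f$ is a one‑line observation, and the separation needs an explicit example.

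For the inclusion, the idea is simply to feed the definition of strong $f$‑normality the identity transducer. Let $D_{\mathrm{id}}$ be the one‑state FS $\Sigma_b$‑transducer with $\delta(q_0,a)=q_0$ and $out(q_0,a)=a$. Then $D_{\mathrm{id}}(p)=p$ for every string $p$, so $C_{D_{\mathrm{id}}}(\sigma)=|\sigma|$, and comparing the two displayed minima gives $C^f_{n,D_{\mathrm{id}}}(x)=C^f_n(x)$ for every $n$. Instantiating the defining clause of $\mathrm{SNorm}^f$ at $D=D_{\mathrm{id}}$ therefore yields verbatim the defining clause of $\mathrm{WNorm}^f$; that is the whole argument for this direction.

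For the separation I would work in base $b=2$ and take $f=f_2$, the identity on $\Sigma_2^{<\omega}$, so that $f_2(\sigma)=0.\sigma$. The preliminary remark is that $C^{f_2}_{n,D}(x)=C_{n,D}(x)$ for every transducer $D$, so the equivalence of clauses (1) and (6) in Theorem \ref{CharacterizationTheorem} says exactly that strong $f_2$‑normality coincides with normality to base $2$; hence it suffices to exhibit a real which is weakly $f_2$‑normal but not normal to base $2$. I would take $x=\sum_{k\ge1}2^{-k^2}$, whose binary expansion has a $1$ precisely at the perfect‑square positions. Then $x\notin\mathrm{SNorm}^{f_2}$ is immediate: the frequency of the digit $1$ among the first $n$ digits of $x$ is $\lfloor\sqrt n\rfloor/n\to 0\ne\tfrac12$, so $x$ is not normal to base $2$ (and the failure of clause (6) even supplies an explicit witnessing transducer together with an $\epsilon$). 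The work is in verifying $x\in\mathrm{WNorm}^{f_2}$. Here the key step is an elementary fact about the identity representation: if $|0.\sigma-x|<2^{-n}$ with $|\sigma|=m<n$, then $0.\sigma$ is one of the two multiples of $2^{-m}$ nearest $x$, and, $x$ being irrational, this forces $x_{m+1}=x_{m+2}=\cdots=x_n$. Thus $C^{f_2}_n(x)<n(1-\epsilon)$ is possible only when $x$ has a block of more than $\epsilon n$ equal digits filling positions $m+1,\dots,n$ for some $m<n(1-\epsilon)$. For this $x$, runs of $1$'s have length $1$ (no two consecutive positive integers are both squares), while the maximal run of $0$'s beginning at position $k^2+1$ has length $2k$; a short count shows that such a run can make some $n$ ``bad'' for a fixed $\epsilon$ only when $2k>\epsilon n$ and $n>k^2$, which force $k<2/\epsilon$. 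So for each $\epsilon>0$ only finitely many $n$ are bad, whence $C^{f_2}_n(x)\ge n(1-\epsilon)$ for almost every $n$; since $\epsilon>0$ was arbitrary, $x\in\mathrm{WNorm}^{f_2}$.

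The main obstacle — essentially the only nonroutine point — is this last part: establishing the dictionary between ``$C^{f_2}_n(x)$ drops below $n(1-\epsilon)$'' and ``$x$ has a long block of constant digits ending near position $n$'', and then checking that the $0$‑runs of $\sum_k 2^{-k^2}$ grow slowly enough relative to their starting position that, for each fixed $\epsilon$, they violate the inequality at only finitely many $n$. Everything else reduces to unwinding the definitions.
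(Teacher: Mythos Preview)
Your argument for the inclusion $\mathrm{SNorm}^f\subseteq\mathrm{WNorm}^f$ is exactly the paper's: specialize to the identity transducer.

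For the separation, your proof is correct but takes a different and noticeably harder route than the paper's. The paper works in a base $b>2$, takes $f$ to be the identity and $x$ the real whose base-$b$ expansion is the constant digit $(b-2)$; then $C^f_n(x)=n$ is immediate (any length-$m$ string has $|0.\sigma-x|\ge b^{-m}/(b-1)$), so weak $f$-normality is a one-line check, while the doubling transducer $D$ sending $(b-2)\mapsto(b-2)(b-2)$ gives $C^f_{n,D}(x)\approx n/2$ and kills strong $f$-normality directly. Your base-$2$ example $x=\sum_k 2^{-k^2}$ instead outsources the failure of strong normality to the Characterization Theorem and then does real work for weak normality, namely the dictionary ``$C^{f_2}_n(x)$ small $\Leftrightarrow$ long constant block ending near $n$'' together with a growth estimate on the $0$-runs. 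That argument is sound (your bound $k<2/\epsilon$ is exactly what falls out), but it is a fair amount of analysis to replace what in the paper is a constant-digit triviality. The trade-off: the paper's example is shorter and entirely self-contained, while yours stays in base $2$ and illustrates the link to the Characterization Theorem more explicitly.
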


\begin{proof}  Let $x$ be strongly $f$-normal.  Then, in particular, if $D$ is the identity transducer, we have $C^f_{n,D} = C^f_n$, so that $x$ is weakly $f$-normal.  On the other hand, 
 let $b>2$, and $f$ be the function which returns the numerical value of a string in $\Sigma_b^{<\omega}$.  Let $x=\sum\limits_{i\in \mathbb{N}} (b-2)b^{-i}$. Then $C^f_{n}(x)=n$, while $C^f_{n,D}(x) =n/2$ for any finite automaton $D$ that, on input $(b-2)$ outputs the concatenation $(b-2)(b-2)$.    
\end{proof}

Notice that for every $x \in \real$ there is a function $f$ such that $x$ is not weakly $f$-normal.
We believe that the study of weak and strong $f$-normality can be of independent interest, for instance in terms of equidistribution properties \cite{PSPFSD}. 

In the following two sections, we {\color{blue}describe} the properties of numbers which are (strongly or weakly) $f$-normal with respect to a large family of functions $f$.  In Section \ref{sec:supernormal} we consider the family of upper semi-computable functions (so that the effectiveness condition itself includes an element of approximation), and in Section \ref{sec:highlynormal} we consider the family of computable functions.

\section{Supernormal Numbers}\label{sec:supernormal}

It is reasonable to expect that an effective notion of approximation {\color{blue} allows} for approximation in the effective functions themselves.  The following definition is common in situations of computation where approximation is called for.

\begin{dfn} A function $f: \Sigma_b^{<\omega}\to \rat$ is said to be \emph{upper semi-computable} if and only if there is a computable function $g: \mathbb{N}\times  \Sigma_b^{<\omega}\to \rat$ such that for each pair $(m,x)$, we have 
\begin{enumerate}
    \item $g(m,x) \geq f(x)$, 
    \item $g(m,x) \geq g(m+1,x)$, and
    \item $\lim\limits_{n \to \infty} g(n,x) = f(x)$.\end{enumerate}
    \end{dfn}

These functions have also been called \emph{approximable from above} \cite{CCM2013}. Notice that they are total functions.
{\color{blue} A \emph{universal upper semi-computable} function $U:2^{<\omega} \to 2^{<\omega}$ is such that for every $b\in\bN$ and every  upper semi-computable $f: \Sigma_b^{<\omega}\to \rat$ there is a $p$ such that for all $x$, we have $f(x)=U(\langle p,x\rangle )$. We now define a notion of descriptive complexity with respect to all upper semi-computable functions.}

\begin{dfn} We say that a real $x$ is \textit{supernormal} if and only if it is strongly $f$-normal with respect to all $b\in\bN$ and all upper semi-computable $f:\Sigma_b^{<\omega}\to \rat$.\end{dfn}

{\color{blue} In fact, for $f$ an upper semi-computable function, notions of descriptive complexity with or without finite-state machines coincide.}

\begin{thm}[]\label{raro1}The following properties of a real $x$ are equivalent.
\begin{enumerate}
    \item $x$ is supernormal.
    \item For all $b\in\bN$ and all upper semi-computable functions $f:\Sigma_b^{<\omega}\to \rat$, the real $x$ is weakly $f$-normal.
    \item The real $x$ is strongly $f$-normal for some universal upper semi-computable function $f$.
    \item The real $x$ is weakly $f$-normal for some universal upper semi-computable function $f$.
\end{enumerate}
\end{thm}
\begin{proof}
    
    We first show that ($1$) implies ($2$). Suppose for all upper semi-computable functions $f$, the real $x$ is strongly $f$-normal, i.e., for all finite state machines $D$ and for all $\epsilon$ and for almost all $n$,
    \[
    C_{n,D}^f(x)\geq (1-\epsilon)n.
    \]
    In particular, this holds for the identity automata $I$ where for all $\sigma$, we have $I(\sigma)=\sigma$, and $C_I(\sigma)=|\sigma|$. So, for an arbitrary upper semi-computable function $g$ and for $I$, we have that
    \[
        C_{n,I}^g(x)\geq (1-\epsilon)n,
    \]
    and moreover, $C_{n,I}^g(x)=C_n^g(x)$. Hence, $C_n^g(x)\geq (1-\epsilon)n$ for all $\epsilon$ and for almost all $n$.

    For ($2$) implies ($3$), suppose that for all universal upper semi-computable functions $f$, $x$ is not strongly $f$-normal, and so there exists a finite state machine $D$ and $\epsilon$, and there exist infinitely many $n$ where
    \[
    C_{n,D}^f(x) < (1-\epsilon)n.
    \]

    We define a new upper semi-computable function $g$ by letting $g(\tau)=f\circ D(\tau)$ for all $\tau\in 2^{<\omega}$ for some universal upper semi-computable $f$. Since $f$ was upper semi-computable, so is $g$. Then, we have
    \begin{align*}
        C_n^g(x) &= \min\{|\tau| : |g(\tau)-x|<2^{-n}\}\cup\{n+1\} \\
        &= \min\{C_D(D(\tau)) : |f(D(\tau))-x|<2^{-n}\}\cup\{n+1\}  \\
        &=\min\{C_D(\sigma) : |f(\sigma)-x|<2^{-n}\}\cup\{n+1\} = C_{n,D}^f(x).
    \end{align*}
    Since $C_{n,D}^f(x)< (1-\epsilon)n$, then $C_n^g(x)<(1-\epsilon)n$, and so $x$ is not weakly $g$-normal for our particular $g$.

    For ($3$) implies ($4$), suppose $x$ is strongly $f$-normal for some universal upper semi-computable function $f$. Then for the identity automata $I$, we have that for all $\epsilon$ and for almost all $n$
    \[
    C_{n,I}^f(x)\geq (1-\epsilon)n.
    \]
    By the definition of $I$, $C_I(\sigma)=|\sigma|$ and so $C_{n,I}^f(x)=C_n^f$. Hence, $C_n^f\geq (1-\epsilon)n$, and so $x$ is weakly $f$-normal.

    Finally we prove ($4$) implies ($1$). Suppose there exists some upper semi-computable $f$ such that $x$ is not strongly $f$-normal, and so there exists a finite state machine $D$ and an $\epsilon$ such that for all $k$ there is an $n$ where
    \[
    C_{n,D}^f(x) < (1-\epsilon)n-k.
    \]
   
   In particular, we have that $C_{n,D}^f(x) < (1-\epsilon)n-(k+c)$ for this finite state machine $D$ and $\epsilon$ and $n$ depending on $k+c$ with $k$ arbitrary. We now define $g=f\circ D$, and this is an upper semi-computable function since $f$ was upper semi-computable. Let $U$ be a universal upper semi-computable function, let $c$ be such that  for all $m$ and $y$, we have that
    \[
    C^U_m(y)\leq C^g_m(y)+c,
    \]
    We also get that $C^g_n(x)\leq C^f_{n,D}(x)<(1-\epsilon)n-(k+c)$. Therefore,
    \begin{align*}
    C^U_n(x) &\leq C^g_n(x)+c \\ &<(1-\epsilon)n-(k+c)+c \\
    &= (1-\epsilon)n-k.
    \end{align*}

    So $x$ cannot be weakly $U$-normal for given universal upper semi-computable function $U$.


\end{proof}

{\color{blue} 
We have chosen the class of upper semi-computable functions here. Notice that if, instead, we used the class of lower semi-computable functions, we would get the set of complements of the numbers we define as supernormal here. In this sense, our definition is robust up to complement or reflection.

}

We  next {\color{blue} explore} the relationship of supernormality with other algorithmic randomness notions.

\begin{prop} Every $2$-random real is supernormal.\end{prop}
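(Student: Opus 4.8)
The plan is to verify condition (4) of Theorem~\ref{supernormequiv}: fix a base $b$ and a universal upper semi-computable function $f\colon\Sigma_b^{<\omega}\to\rat$, and show that every $2$-random real $x$ is weakly $f$-normal. (One could instead verify condition (2) directly, running the same construction uniformly along an enumeration of all upper semi-computable functions; the two presentations are interchangeable.) Since a $2$-random real is exactly a Martin-L\"of random real relative to $\emptyset'$, it suffices to exhibit, for each positive rational $\epsilon$, a $\emptyset'$-Martin-L\"of test capturing the event ``$C^f_n(x)<(1-\epsilon)n$ for infinitely many $n$''.

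So fix a rational $\epsilon>0$. For each $n$ let $B_n$ be the set of $y\in[0,1]$ with $C^f_n(y)<(1-\epsilon)n$; unwinding the definition, $B_n$ is the union of the open intervals $(f(\sigma)-b^{-n},\,f(\sigma)+b^{-n})$ as $\sigma$ ranges over the strings of length $<(1-\epsilon)n$. There are only $O(b^{(1-\epsilon)n})$ such strings, so $\mu(B_n)=O(b^{-\epsilon n})$ and $\sum_n\mu(B_n)<\infty$. Choosing, computably from $j$, an index $n_j$ with $\sum_{k\ge n_j}\mu(B_k)\le 2^{-j}$ and putting $V_j=\bigcup_{k\ge n_j}B_k$ yields a nested sequence of open sets with $\mu(V_j)\le 2^{-j}$ whose intersection contains every $y$ lying in infinitely many $B_n$.

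The one delicate point is that $f$ is merely upper semi-computable, so the $B_n$ are not literally effectively open. Writing $f(\sigma)=\lim_s g(s,\sigma)$ as a non-increasing limit of a computable sequence of rationals, one checks that $\emptyset'$ can compute $f(\sigma)$ to within any prescribed precision, uniformly in $\sigma$: for a given precision, search for the least stage $s$ at which the purely existential (hence $\emptyset'$-decidable) assertion ``some later value of $g(\cdot,\sigma)$ undercuts $g(s,\sigma)$ by more than that precision'' fails; this search halts because $g(\cdot,\sigma)$ converges, and when it halts $g(s,\sigma)$ is the desired approximation. Replacing each $f(\sigma)$ by such an approximation and enlarging the intervals by a corresponding amount produces sets $\widehat B_n\supseteq B_n$ of the same order of measure that are \emph{uniformly} $\emptyset'$-c.e.\ open; adjusting $n_j$ if necessary, the resulting $(\widehat V_j)_j$ is then a genuine $\emptyset'$-Martin-L\"of test.

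Since $x$ is $2$-random it passes this test, so $x\notin\widehat V_j$ for some $j$, which forces $x\notin B_k$ for all $k\ge n_j$; that is, $C^f_k(x)\ge(1-\epsilon)k$ for almost every $k$. As $\epsilon$ ranged over the positive rationals (and any positive real $\epsilon$ may be replaced by a smaller rational), $x$ is weakly $f$-normal, hence supernormal by Theorem~\ref{supernormequiv}. I expect the only genuine obstacle to be this effectivity bookkeeping --- making the evident null cover witnessing non-normality $\emptyset'$-effective even though $f$ is only approximable from above; the measure bound itself is just the standard ``too few short programs'' counting estimate, and $2$-randomness rather than a weaker notion is used precisely because $(\widehat V_j)_j$ is a bona fide shrinking test, not merely a null $G_\delta$ class.
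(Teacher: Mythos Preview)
Your proof is correct and follows essentially the same approach as the paper's: both observe that an upper semi-computable $f$ is $\emptyset'$-computable and then build a $\emptyset'$-Martin-L\"of test for the failure of weak $f$-normality via the obvious counting bound on short descriptions. Your treatment of the effectivity step (computing $f(\sigma)$ to prescribed precision from $\emptyset'$ and enlarging the intervals accordingly) is in fact spelled out more carefully than the paper's sketch, and your Borel--Cantelli packaging of the $B_n$ into a shrinking test is slightly cleaner than the paper's index-juggling with the auxiliary parameter $k$.
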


\begin{proof}
    Suppose $x$ is not weakly $f$-normal, with $f$ upper semi-computable, then 
    there exists  $\epsilon$, such that for each $k$ there is an $n$ with
    $    C_{n}^f(x) < (1-\epsilon)n-k$.
 Since $f$ is upper semi-computable, we have $f \leq_T \emptyset'$.  
 Now for each constant $k$, we now describe a sequence of uniformly $\Sigma^{0,\emptyset'}_1$ classes $\left(U_{f,k,n} :n \in \mathbb{N}\right)$ such that $x \in \bigcap\limits_{n,k\in \mathbb{N}} U_{f,k,n}$.

    We let $U^0_{f,k,n}$ be the set of reals with the property that $y \in U^0_{f,k,n}$ if and only if there is a string $\sigma$ of length at most $(1-\epsilon)n-k$ with $|f(\sigma)-y|<2^{-2n}$.  Since there are only finitely many strings of that length, this condition can be checked by a $\emptyset'$ oracle (which is needed only to compute $f(\sigma)$).  We refine this sequence so that $U_{f,n,k}$ is defined to be $U^0_{f,m,k}$ for the least $m\geq n$ such that $\mu\left(U^0_{f,m,k}\right) < 2^{-n}$.
    \end{proof}

Indeed, something stronger is true.

\begin{prop} If $\dim^{\emptyset'}(x)=1$ (usually written $2$-$\dim(x)=1$) then $x$ is supernormal.
\end{prop}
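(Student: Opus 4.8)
The plan is to prove the contrapositive: assuming $x$ is not supernormal, I will show $\dim^{\emptyset'}(x)<1$. By Theorem~\ref{supernormequiv}, if $x$ is not supernormal then it fails to be weakly $f$-normal for some upper semi-computable $f:\Sigma_b^{<\omega}\to\rat$, so there are an $\epsilon>0$ and infinitely many $n$ for which there is a string $\sigma_n\in\Sigma_b^{<\omega}$ with $|\sigma_n|<(1-\epsilon)n$ and $|f(\sigma_n)-x|<b^{-n}$. It is harmless to take $b=2$: for general $b$ one encodes $\sigma_n$ in binary and rescales the target precision $b^{-n}$, paying only a constant factor $\log_2 b$ in both the description length and the number of recovered bits, which leaves the relevant ratio unchanged. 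So assume $f:\Sigma_2^{<\omega}\to\rat$ and $|\sigma_n|<(1-\epsilon)n$, $|f(\sigma_n)-x|<2^{-n}$ for infinitely many $n$.

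The key observation is that, although an upper semi-computable $f$ need not be computable, it is $\emptyset'$-computable in the weak sense sufficient here: from $\emptyset'$ one can, given $\sigma$ and $n$, produce a rational $v$ with $|v-f(\sigma)|<2^{-n}$. (If $g$ witnesses the upper semi-computability of $f$, then $\emptyset'$ can search for the least $m$ such that no later value $g(m',\sigma)$ with $m'\geq m$ drops below $g(m,\sigma)-2^{-n}$; such an $m$ exists because $g(m,\sigma)$ decreases to $f(\sigma)$ as $m\to\infty$, and for this $m$ one has $0\leq g(m,\sigma)-f(\sigma)<2^{-n}$, so $v:=g(m,\sigma)$ works.) Using this, I will build a prefix-free oracle machine $M$ with oracle $\emptyset'$ which reads a self-delimiting code for a number $n$, then two bits encoding some $j\in\{-1,0,1\}$, then a self-delimiting code for a string $\sigma\in\Sigma_2^{<\omega}$; it computes a rational $v$ with $|v-f(\sigma)|<2^{-n}$ and outputs the length-$n$ binary string representing $\lfloor 2^n v\rfloor+j$ when this lies in $\{0,\dots,2^n-1\}$. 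For each of the infinitely many $n$ above, feeding $M$ the codes of $n$ and of $\sigma_n$ together with the correct $j$ outputs $x[1..n]$, since $|v-x|\leq|v-f(\sigma_n)|+|f(\sigma_n)-x|<2^{-n+1}$ forces $\lfloor 2^n x\rfloor$ to differ from $\lfloor 2^n v\rfloor$ by at most $1$. Hence
\[ \K^{\emptyset'}(x[1..n]) \;\leq\; |\sigma_n| + O(\log n) \;<\; (1-\epsilon)n + O(\log n) \]
along these infinitely many $n$, so $\dim^{\emptyset'}(x)=\liminf_n \K^{\emptyset'}(x[1..n])/n\leq 1-\epsilon<1$, which is exactly the contrapositive of the proposition.

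The only genuinely delicate point is the weak $\emptyset'$-computability of $f$: one must resist computing $f(\sigma)$ exactly — the decreasing limit defining it need not be attained, and isolating the exact limiting rational is a priori beyond $\emptyset'$ — and settle instead for the $2^{-n}$-approximation, which is genuinely $\emptyset'$-decidable as indicated. Everything else is routine bookkeeping: the self-delimiting prefixes contribute only $O(\log n)=o(n)$ to the description length; recovering $x[1..n]$ from a $2^{-n+1}$-approximation costs $O(1)$ bits; the domain of $M$ as described is prefix-free; and the passage from an arbitrary base $b$ to base $2$ merely rescales lengths and precisions by the constant $\log_2 b$.
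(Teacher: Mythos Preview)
Your argument is correct and follows the same route as the paper's: prove the contrapositive, use that an upper semi-computable $f$ is accessible to $\emptyset'$, and describe $x[1..n]$ from $\sigma_n$ plus $O(\log n)$ overhead and a small correction term $j\in\{-1,0,1\}$. Your treatment is in fact slightly more careful than the paper's on one point: the paper simply asserts $f\le_T\emptyset'$, whereas you correctly observe that one should not try to compute the exact rational value $f(\sigma)$ from $\emptyset'$ but only a $2^{-n}$-approximation, and you verify that this suffices.
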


 \begin{proof}
      Suppose $x$ is not weakly $f$-normal, with $f$ upper semi-computable, then 
    there exists  $\epsilon$, such that for infinitely many $n$,
    $    C_{n}^f(x) < (1-\epsilon)n$.
  So there is a $\sigma$ such that $|\sigma|<(1-\epsilon)n$ where $|f(\sigma)-x|< 2^{-n}$. 
   Since $f$ is upper semi-computable, we have $f \leq_T \emptyset'$ and  $C^{\emptyset'}(x[1..n])<(1-\epsilon)n+c+2+2\log n$ where $c$ is the length of a program computing $f$ with oracle $\emptyset'$, and we use two extra bits to decide on $f(\sigma)[1..n]$, its successor, or its predecessor. 
It follows that $\dim^{\emptyset'}(x)<1$.
 \end{proof}


\begin{prop}\label{snisrandom} If a real $x$ is supernormal, then $\dim(x)=1$.\end{prop}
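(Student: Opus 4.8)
The plan is to prove the contrapositive: if $\dim(x)<1$, I will produce a single upper semi-computable function $f\colon \Sigma_2^{<\omega}\to\rat$ for which $x$ fails to be weakly $f$-normal, which contradicts clause $(2)$ of Theorem~\ref{supernormequiv}. From $\dim(x)<1$ fix $\epsilon>0$ and infinitely many $n$ with $\K(x[1..n])<(1-\epsilon)n$, and for each such $n$ choose a program $p_n$ with $U(p_n)=x[1..n]$ and $|p_n|<(1-\epsilon)n$.

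The function I would use is the one that interprets its input as a program for the universal machine $U$ and returns the dyadic rational named by the output: $f(\sigma)=0.U(\sigma)$ when $U(\sigma)\halt$, and $f(\sigma)=1$ otherwise. This is total, and it is upper semi-computable: let $g(m,\sigma)$ be $1$ if $U(\sigma)$ has not halted within $m$ steps and be $0.w$ once we observe $U(\sigma)=w$ halt (note that $0.w\in[0,1)$ for every finite binary string $w$). Then $g$ is computable, $g(m,\sigma)\geq g(m+1,\sigma)$, and $g(m,\sigma)\to f(\sigma)$, so $f$ is upper semi-computable.

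Now for each of the infinitely many chosen $n$ we have $f(p_n)=0.x[1..n]$, hence $|f(p_n)-x|\leq 2^{-n}<2^{-(n-1)}$, so that $p_n$ is a competitor in the minimum defining $C^f_{n-1}(x)$ and $C^f_{n-1}(x)\leq |p_n|<(1-\epsilon)n$. For all sufficiently large $n$ this gives $C^f_{n-1}(x)<(1-\epsilon/2)(n-1)$; writing $m=n-1$, we obtain $C^f_m(x)<(1-\epsilon/2)m$ for infinitely many $m$. Thus $x$ is not weakly $f$-normal for this upper semi-computable $f$, so $x$ is not supernormal, which proves the contrapositive.

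I do not anticipate a serious obstacle. The two points needing care are checking that the stagewise approximation $g$ is genuinely nonincreasing — which works precisely because a real output $0.w$ always lies strictly below the placeholder value $1$ used before a halt is detected — and absorbing the constant loss incurred by shifting the precision index from $n$ to $n-1$ into the harmless replacement of $\epsilon$ by $\epsilon/2$. (Alternatively, one can work at level $n$ directly after observing that a supernormal $x$ is normal to base $2$, hence not a dyadic rational, so $|f(p_n)-x|<2^{-n}$ strictly.)
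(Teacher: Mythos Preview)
Your argument is correct and follows essentially the same approach as the paper: define $f(\sigma)=0.U(\sigma)$ when $U(\sigma)\halt$ and $f(\sigma)=1$ otherwise, verify upper semi-computability via time-bounded simulation, and use short programs for $x[1..n]$ to witness small $C^f$-values. You are in fact slightly more careful than the paper in handling the strict inequality in the definition of $C^f_n$ (via the shift to $n-1$ or the observation that a supernormal $x$ is not dyadic).
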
 
\begin{proof}
Let $U$ be a universal Turing machine. For $n\in\bN$, let $U_n$ be $U$ with time limit $n$.
We define an upper semi-computable function $f$ in the following way.  We set \[g(x,n)=\left\{ \begin{array}{ll} 1 & \mbox{if $U_n(x)\uparrow$}\\
U(x) & \mbox{if $U_n(x)\downarrow$}\end{array}\right.\]
and set $f(x) = \lim\limits_{n \to \infty} g(x,n)$.

Now suppose that $\dim(x)<1$. Then there is an $\epsilon$ where for infinitely many $n$, $C(x[1..n]) < (1-\epsilon)n$, and it follows that $C_n^f(x) < (1-\epsilon)n$ for infinitely many $n$.  Thus $x$ is not supernormal.
\end{proof}


Now we can show that there is a supernormal number which is not 1-random.  Indeed, if we let $x$ be $2$-random and for each $k$, insert the string $0^{k^2}$ after the first $2^k$ bits, the resulting $\tilde{x}$ infinitely often has $K(\tilde{x}[1..n])<n-\log^2 n$ and $\tilde{x}$ is not 1-random, but $2$-$\dim(\tilde{x})=1$, so $\tilde{x}$ is supernormal.

We now show that there is a 1-random real which is not supernormal.

\begin{prop}\label{cOmeganotSN} $\overline{\Omega}$ is not supernormal.\end{prop}

\begin{proof}
    We define $f$ in the following way.  Let $(q_i : i \in \mathbb{N})$ be a computable decreasing sequence of rationals converging to $\overline{\Omega}$. For each $x$, we define $g(x,0) = 1$. We now define, for $n>0$, \[g(\sigma,n) = \left\{\begin{array}{ll} q_n & \mbox{ if $|g(\sigma,n-1)-q_{n}|>2^{-2^{|\sigma|}}$}\\ g(x,n-1) & \mbox{otherwise}\end{array}\right. ,\] and take $f(\sigma) = \lim\limits_{n \to \infty} g(\sigma,n)$.

    Then we argue that $|f(\sigma)-\overline{\Omega}|\le 2^{-2^{|\sigma|}}$ and $\overline{\Omega}$ is not weakly  $f$-normal.
\end{proof}

\begin{prop}
    $\Omega$ is supernormal.
\end{prop}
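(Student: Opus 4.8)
The plan is to invoke clause~(2) of Theorem~\ref{supernormequiv}: it suffices to show that $\Omega$ is weakly $f$-normal for every upper semi-computable $f\colon \Sigma_b^{<\omega}\to\rat$. So I would suppose not and fix such an $f$, together with a computable witness $g$ (so $g(m,\sigma)\ge f(\sigma)$, $g(\cdot,\sigma)$ nonincreasing, $\lim_m g(m,\sigma)=f(\sigma)$), an $\epsilon>0$, and infinitely many $n$ for each of which there is $\sigma_n$ with $|\sigma_n|<(1-\epsilon)n$ and $|f(\sigma_n)-\Omega|<b^{-n}$. The goal is then a contradiction with the Martin-L\"of randomness of $\Omega$, and the key extra ingredient is that $\Omega$ is left-c.e.: fix a computable increasing sequence of rationals $(p_s)_{s\in\bN}$ with $p_s\uparrow\Omega$, so $p_s<\Omega$ for all $s$ since $\Omega$ is irrational.

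Next I would build a Solovay test. For each $n$ and each $\sigma\in\Sigma_b^{<\omega}$ with $|\sigma|<(1-\epsilon)n$, search for the least $m$ with $g(m,\sigma)-b^{-n}<p_m$; if such an $m^\ast=m^\ast(n,\sigma)$ is found, enumerate the rational interval $J_{n,\sigma}=\bigl(g(m^\ast,\sigma)-b^{-n},\,g(m^\ast,\sigma)+b^{-n}\bigr)$. Since $g$ and $(p_s)$ are computable this is a c.e.\ procedure, and each $J_{n,\sigma}$ has measure $2b^{-n}$, so
\[
\sum_{n}\ \sum_{|\sigma|<(1-\epsilon)n}\mu(J_{n,\sigma})\ \le\ \sum_n b^{(1-\epsilon)n+1}\cdot 2b^{-n}\ =\ 2b\sum_n b^{-\epsilon n}\ <\ \infty,
\]
so $(J_{n,\sigma})$ is a Solovay test.

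Then I would check that $\Omega$ lies in infinitely many $J_{n,\sigma}$. Fix one of the infinitely many $n$ with its witness $\sigma_n$. From $|f(\sigma_n)-\Omega|<b^{-n}$ we get $f(\sigma_n)-b^{-n}<\Omega$; since $g(m,\sigma_n)-b^{-n}$ decreases to $f(\sigma_n)-b^{-n}$ while $p_m$ increases to $\Omega$, there is an $m$ with $g(m,\sigma_n)-b^{-n}<p_m$, so $m^\ast(n,\sigma_n)$ exists and $J_{n,\sigma_n}$ is enumerated. Its left endpoint satisfies $g(m^\ast,\sigma_n)-b^{-n}<p_{m^\ast}<\Omega$, and its right endpoint satisfies $g(m^\ast,\sigma_n)+b^{-n}\ge f(\sigma_n)+b^{-n}>\Omega$ (using $f(\sigma_n)>\Omega-b^{-n}$), so $\Omega\in J_{n,\sigma_n}$. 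Hence $\Omega$ belongs to infinitely many members of a Solovay test, contradicting its Martin-L\"of randomness. Therefore $\Omega$ is weakly $f$-normal for every upper semi-computable $f$, i.e.\ $\Omega$ is supernormal.

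The main obstacle is keeping the test's total measure finite. The naive attempt---enumerating, for each candidate $\sigma$, every interval $\bigl(g(m,\sigma)-b^{-n},g(m,\sigma)+b^{-n}\bigr)$---fails, because as $g(m,\sigma)$ descends to $f(\sigma)$ these intervals can sweep across an arbitrarily long range before settling near $\Omega$. The fix is to enumerate exactly one interval per pair $(n,\sigma)$, triggered the first moment the left endpoint $g(m,\sigma)-b^{-n}$ falls below the current left-c.e.\ approximation $p_m$ of $\Omega$: the one-sided approximation from above built into ``upper semi-computable'' meshes with the one-sided approximation from below built into ``left-c.e.'' to guarantee that this single interval still straddles $\Omega$ whenever $\sigma$ is a genuine witness. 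This is precisely the asymmetry that makes $\Omega$ supernormal while $\overline{\Omega}$ is not, in contrast with Proposition~\ref{complement of Omega is not SN}.
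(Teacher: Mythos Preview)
Your argument is correct. The Solovay test is well-defined and c.e.\ (the search for $m^\ast$ is a computable partial procedure, and you enumerate $J_{n,\sigma}$ precisely when it halts), the measure bound is fine up to harmless constants, and the verification that $\Omega\in J_{n,\sigma_n}$ for each bad $n$ uses exactly the right inequalities: $g(m^\ast,\sigma_n)-b^{-n}<p_{m^\ast}<\Omega$ on the left, and $g(m^\ast,\sigma_n)+b^{-n}\ge f(\sigma_n)+b^{-n}>\Omega$ on the right. Your closing remark about the asymmetry with $\overline{\Omega}$ is also exactly on point.

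Your route is genuinely different from the paper's. The paper proceeds by \emph{building} a universal prefix-free machine $U$: at stage $s$ it looks for pairs $(n,\sigma)$ with $|\Omega_s-f(\sigma,s)|<2^{-n}$ and $|\sigma|<n-K_s(n)-k$, and actively pumps $\Omega$ up by $2^{-n+1}$ to spoil $\sigma$ as a witness, using a Kraft--Chaitin style bound $\sum_n 2^{-K(n)}<1$ to control the total mass added; it then asserts that the result transfers to every universal machine. Your argument, by contrast, works directly for \emph{any} left-c.e.\ Martin-L\"of random real, with no construction and no transfer step: you simply exploit that the upper semi-computable approximation $g(\cdot,\sigma)$ descends while the left-c.e.\ approximation $p_m$ ascends, so one well-timed interval per $(n,\sigma)$ suffices. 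This is shorter, covers all $\Omega$'s at once, and makes the contrast with Proposition~\ref{complement of Omega is not SN} transparent. The paper's approach, on the other hand, is more hands-on and would adapt more readily if one wanted finer quantitative control over the constants involved.
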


\begin{proof}
We first {\color{blue} construct} a universal machine $U$ such that $\Omega_U$ is supernormal.  It {\color{blue} follows} then that $\Omega_W$ is supernormal for any universal machine $W$.

 Fix an upper semi-computable $f$.  We {\color{blue} show} that $\Omega$ is weakly $f$-normal, that is, that for all $\epsilon$ there is a $k$ such that for all $n$, $C^f_n(\Omega) \geq (1-\epsilon) n- k$. Let  $\Omega_s =
\sum\limits_{U_s(\sigma)\halt} 2^{-|\sigma|}$. For each $s$, let $f_s(\sigma)=f(\sigma, s)$. We show that if $n<s$ and $\sigma$ witnesses that  $C^{f_s}_n(\Omega_s) < (1-\epsilon) n- k \le n- K_s(n) - k$ then $|\Omega_s - \Omega| \ge 2^{-n+1}$ which implies that $f(\sigma)\le f(\sigma, s)\le \Omega- 2^{-n}$ and  $\sigma$ does not witness that $C^{f}_n(\Omega) < n- K_s(n) - k$.




    Recall that $\sum\limits_n 2^{-K(n)} < 1$.  Fix a $k$ such that $2^{-k}\sum\limits_n 2^{-K(n)} < \epsilon/2$.  At stage $s$, we search for an $n < s$ and a $\sigma$ with:
    \begin{enumerate}
        \item $0<| \Omega_s - f(\sigma, s)| < 2^{-n}$; and
        \item $|\sigma| < n - K_s(n) - k$.
    \end{enumerate}
    We increase $\Omega$ by the minimal amount necessary ($2^{-n+1}$) to arrange that $|\Omega - f(\sigma, s)| \ge 2^{-n}$ for each such $\sigma$ and $n$. Therefore
    $|\Omega - f(\sigma)| \ge 2^{-n}$.

    Notice that we may need to address a given $\sigma$ multiple times as $K$ drops: at stage $s$, $\sigma$ may require attention for $n+1$, but not for $n$ because $|\sigma| \ge n - K_s(n) - k$.  If at a stage $t > s$ we have $K_t(n) < K_s(n)$, then $\sigma$ may require attention for $n$ at stage $t$.  However, once we have addressed $\sigma$ for a given $n$, it will never again require attention for that $n$ (this is because the approximation to $f$ is non-increasing at $n$, while the approximation to $\Omega$ is non-decreasing).  In particular, it will not witness $C_n^f(\Omega) < n - K(n) - k$.
   
    If we fix the smallest $n$ with $|\sigma| < n - K(n) - k$, then we will issue at most a $2\cdot 2^{-n}$ increase to $\Omega$ on behalf of $\sigma$.  
 For this, we construct a Turing machine $M$ with coding constant $e$, that is, if $M(\rho)\halt$ then $U(\tau)\halt$ for $|\tau|=|\rho|+e$. For $\sigma$ we add $\rho$ with $\sigma\sqsubseteq \rho$ and $|\rho|=n-1-e$ to the domain of $M$, and therefore $2^{-n+1}$ to $\Omega$.
    
    There are fewer than $2^{n-K(n)-k}$ strings of length strictly less than $n-K(n) - k$, so if we partition the strings by $n$, we see that the total increase we issue is bounded by:
    \[
        \sum_{n} 2^{n-K(n)-k}\cdot 2 \cdot 2^{-n} = \sum_{n} 2^{-K(n)-k+1} < \epsilon.
    \]
    Thus we do not issue too much increase to $\Omega$, and the construction can proceed as described.  As we ensure that no $\sigma$ witnesses a failure of weak $f$-normality, $\Omega$ is weakly $f$-normal.
\end{proof}

\begin{cor}
    The class of supernormal numbers is strictly between the classes of 2-random and that  of dimension 1 numbers.
\end{cor}
\begin{proof}
    Notice that $\Omega$ is not 2-random (and in fact $\dim^{\emptyset'}(\Omega)=0$) and that $\dim(\overline{\Omega})=1$.
\end{proof}

The following result explains the connection of supernormality to absolute normality.

\begin{cor} If a real number $x$ is supernormal, then it is absolutely normal, and there exists an $x$ which is absolutely normal but not supernormal.\end{cor}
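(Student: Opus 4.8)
The plan is to prove each half separately, leaning on the machinery already assembled. For the first half — supernormal implies absolutely normal — the idea is that supernormality is by definition $f$-normality (in the strong sense) with respect to \emph{all} upper semi-computable $f$, so in particular with respect to the identity function $f_b$ on $\Sigma_b^{<\omega}$ for each base $b$. Since $f_b$ is computable, hence trivially upper semi-computable, $x$ is strongly $f_b$-normal for every $b$. By the remark following the definition of strong $f$-normality, strong $f_b$-normality is, via Theorem \ref{CharacterizationTheorem} (Clause 6), exactly normality to base $b$. As $b$ was arbitrary, $x$ is normal to every base, i.e.\ absolutely normal. This is essentially a one-line chain of implications, so I do not anticipate any obstacle here.

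For the second half I would exhibit a specific absolutely normal real that fails supernormality. The natural candidate is $\overline{\Omega}$ itself: by Proposition \ref{complement of Omega is not SN} it is not supernormal, so it suffices to check that $\overline{\Omega}$ is absolutely normal. Since $\Omega$ is $1$-random (Martin-L\"of random), and $1$-randomness is preserved under the map $y \mapsto 1 - y$ (it is a computable, measure-preserving bijection of $[0,1]$, or one can argue digit-wise), $\overline{\Omega}$ is $1$-random; and every $1$-random real is absolutely normal — this is classical, following from the fact that $1$-randomness implies (indeed is much stronger than) passing every base-$b$ normality test, which in turn is a consequence of the effective Borel normal number theorem. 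So $\overline{\Omega}$ witnesses the second clause.

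Alternatively — and perhaps preferably if one wishes to avoid invoking external facts about $\overline{\Omega}$ — I could reuse the concrete construction from Proposition \ref{snisrandom} or Proposition \ref{complement of Omega is not SN}: take any $1$-random $x$ (which is absolutely normal for the classical reason above), and note the construction in Proposition \ref{complement of Omega is not SN} applies verbatim to the upper graph of any computable decreasing sequence of rationals converging to $x$, producing an upper semi-computable $f$ for which $x$ is not weakly $f$-normal, hence not supernormal. Either way, the existence claim reduces to the single known fact that $1$-random reals are absolutely normal.

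The main point requiring care is the claim that $\overline{\Omega}$ (or the chosen $1$-random real) is absolutely normal: one must be sure that $1$-randomness implies base-$b$ normality for all $b$, not just base $2$. This is standard — for each $b$, the set of reals failing to be normal to base $b$ is a $\Sigma^0_2$ null set, hence contained in a Martin-L\"of null set, so no $1$-random real lies in it — but it is the one place where an appeal outside the self-contained framework of this paper is needed, and I would state it with a citation (e.g.\ to the standard texts on algorithmic randomness) rather than reprove it.
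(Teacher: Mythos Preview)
Your main argument is correct, but it diverges from the paper's in both halves, so a brief comparison is warranted.

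For the implication ``supernormal $\Rightarrow$ absolutely normal'' you argue directly from the definitions: supernormality gives strong $f_b$-normality for the identity $f_b$ on $\Sigma_b^{<\omega}$ (which is computable, hence upper semi-computable), and the remark after the definition of strong $f$-normality together with Clause~6 of Theorem~\ref{CharacterizationTheorem} then yields normality to base~$b$. The paper instead routes through Proposition~\ref{snisrandom}: supernormal $\Rightarrow$ $\dim(x)=1$, and the (implicit, well-known) fact that effective dimension~$1$ implies normality to every base. Your route is more self-contained and avoids the detour through Kolmogorov complexity; the paper's route is shorter to state once Proposition~\ref{snisrandom} is in hand.

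For the existence claim, you exhibit $\overline{\Omega}$: it is $1$-random (hence absolutely normal) and not supernormal by Proposition~\ref{complement of Omega is not SN}. This is valid. The paper instead invokes a \emph{computable} absolutely normal real (as in \cite{Turi13}); such an $x$ has $\dim(x)=0$, so the contrapositive of Proposition~\ref{snisrandom} immediately rules out supernormality. The paper's witness is arguably cleaner because it sidesteps both Proposition~\ref{complement of Omega is not SN} and the external appeal to ``$1$-random $\Rightarrow$ absolutely normal'' that you flag as needing citation.

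One genuine gap: your proposed \emph{alternative} for the second half does not work as stated. You write that for ``any $1$-random $x$'' the construction of Proposition~\ref{complement of Omega is not SN} applies via ``any computable decreasing sequence of rationals converging to $x$.'' But an arbitrary $1$-random real need not admit such a sequence --- that is precisely the property of being right-c.e.\ (approximable from above), and $\overline{\Omega}$ is special in having it. So the alternative should either be restricted to right-c.e.\ $1$-randoms or dropped in favor of your primary $\overline{\Omega}$ argument.
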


\begin{proof} 
This is an easy consequence of  Proposition \ref{snisrandom}. Additionally, \cite{Turi13}\ gives an example of an absolutely normal number which is computable, hence not supernormal.
\end{proof}

We now prove some basic properties of the class of supernormal numbers.

\begin{prop}\label{mult}
    If $x$ is supernormal and $q \in \rat^{+}$, then $qx$ is supernormal.
\end{prop}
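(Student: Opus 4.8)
The plan is to work from the characterization in Theorem~\ref{supernormequiv}(2): a real is supernormal exactly when it is weakly $f$-normal for every upper semi-computable $f$. So it is enough to show that weak $f$-normality for all such $f$ is preserved under multiplication by $q$, and I will prove the contrapositive. Assume $qx$ is not weakly $f$-normal for some upper semi-computable $f\colon \Sigma_b^{<\omega}\to\rat$; thus there is an $\epsilon>0$ with $C^f_n(qx)<(1-\epsilon)n$ for infinitely many $n$. From $f$ and $q$ I will manufacture an upper semi-computable $g\colon \Sigma_b^{<\omega}\to\rat$ witnessing that $x$ itself is not weakly $g$-normal, contradicting supernormality of $x$.

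The natural choice is $g(\sigma)=f(\sigma)/q$. This is again a total function into $\rat$ since $q\in\rat^{+}$, and it is upper semi-computable: if $g_0(k,\sigma)$ is a computable approximation from above to $f$, then $(k,\sigma)\mapsto g_0(k,\sigma)/q$ is computable and, because $q>0$, it is nonincreasing in $k$, bounded below by $g(\sigma)$, and converges to $g(\sigma)$. Next I would fix an integer $c\ge 0$ with $b^{-c}\le q$ (one may take $c=0$ when $q\ge 1$, and $c\ge 1$ when $q<1$). Then $|f(\sigma)-qx|<b^{-n}$ forces $|g(\sigma)-x|=|f(\sigma)-qx|/q<b^{-n}/q\le b^{-(n-c)}$, so, with $m:=n-c$, any string $\sigma$ realizing $C^f_n(qx)$ also shows $C^g_m(x)\le |\sigma|=C^f_n(qx)$. (When $C^f_n(qx)<(1-\epsilon)n$ this value is realized by a genuine string, not by the default term $n+1$, since $n+1\not<(1-\epsilon)n$ for $n\ge 1$.)

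Finally I would run the counting. For each of the infinitely many $n$ with $C^f_n(qx)<(1-\epsilon)n$, put $m=n-c$ and estimate
\[
C^g_m(x)\;\le\;C^f_n(qx)\;<\;(1-\epsilon)n\;=\;(1-\epsilon)(m+c)\;\le\;(1-\epsilon)m+c.
\]
For all large $m$ we have $c<(\epsilon/2)m$, hence $C^g_m(x)<(1-\epsilon/2)m$, and since $n\to\infty$ gives $m\to\infty$ this holds for infinitely many $m$. Thus $x$ is not weakly $g$-normal, the desired contradiction, and $qx$ is supernormal.

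This is a short argument with no real obstacle; the only things to be careful about are that $g=f/q$ genuinely inherits upper semi-computability (dividing by a positive rational is harmless) and that the discrepancy between the approximation radius $b^{-n}$ used for $qx$ and the radius needed for $x$ is only a fixed constant shift $c=c(q,b)$, so that it is absorbed by weakening $\epsilon$ to $\epsilon/2$. One should also note the convention that $C^f_n$ and weak $f$-normality are read for the (possibly out-of-$[0,1]$) real $qx$ by the same formulas.
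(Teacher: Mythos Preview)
Your proof is correct and follows essentially the same approach as the paper: the paper's proof simply asserts (in one sentence) that if $qx$ fails to be weakly $f$-normal then $x$ fails to be weakly $\tfrac{1}{q}f$-normal, which is exactly your $g=f/q$ argument. You have supplied the details the paper omits---verifying upper semi-computability of $f/q$, handling the $b^{-c}$ precision shift, and absorbing the constant into a smaller $\epsilon$---so your write-up is strictly more careful than the paper's sketch.
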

\begin{proof}  
{\color{blue} Let  $f: \Sigma_b^{<\omega}\to \rat$ be upper semi-computable, $q \in \rat^{+}$. Then 
$qf(\sigma)= q\cdot f(\sigma)$ and $\frac{1}{q} f(\sigma)= \frac{1}{q}\cdot f(\sigma)$ are both upper semi-computable.

 If $x$ is weakly $f$-normal,} then $qx$ {\color{blue} is} weakly $qf$-normal.
 Moreover, if $qx$ fails to be weakly $f$-normal, then $x$  {\color{blue} fails} to be weakly $\frac{1}{q}f$-normal.   
\end{proof}

\begin{prop} There are supernormal numbers $x,y$ such that $x+y$ is approximable from above and hence not supernormal.\end{prop}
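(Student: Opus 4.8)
The idea is to split a single $2$-random real (or more simply, a single supernormal real) into two supernormal reals whose sum we control. Start with any supernormal $x$ — for concreteness take $x$ to be $2$-random, so by the earlier proposition $x$ is supernormal, and in fact $2$-$\dim(x)=1$. Write the binary expansion of $x$ as $x = 0.x_1x_2x_3\cdots$. I would partition the bit positions into two infinite computable sets $A$ and $B$ (for instance $A$ the positions in $[2^{2k}, 2^{2k+1})$ and $B$ the positions in $[2^{2k+1},2^{2k+2})$, for all $k$), and define $x'$ to have the bits of $x$ on $A$ and $0$ on $B$, and $x''$ to have the bits of $x$ on $B$ and $0$ on $A$. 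Then $x = x' + x''$ exactly. The point of using long alternating blocks is that on a long stretch where one summand is identically $0$, the other summand agrees with a long prefix of $x$, so it inherits $2$-$\dim = 1$ along a cofinal set of lengths — but $\liminf$ is not enough, so this naive split does not immediately give supernormality of the summands.

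So I would instead aim for the weaker-looking but sufficient target: I do not need $x'$ and $x''$ themselves supernormal for arbitrary $x$; I get to \emph{choose} the decomposition. The cleanest route: let $z$ be a real with $\dim^{\emptyset'}(z) = 1$, and define $x = 0.z_1 0 z_2 0 z_3 0 \cdots$ (interleave the bits of $z$ with zeros) and $y = $ the real whose expansion is $0$ in the odd positions and, in the even positions, carries an \emph{approximable-from-above} sequence of our choosing — no wait, that breaks exactness of the sum. Better: take two reals $u,v$ with $2$-$\dim(u) = 2$-$\dim(v) = 1$ such that $u+v$ is a fixed approximable-from-above (equivalently, left-c.e.-complement, i.e.\ upper semi-computable as a real) number like $\overline{\Omega}$. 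By Proposition \ref{complement of Omega is not SN}, $\overline\Omega$ is not supernormal, and any approximable-from-above real fails weak $f$-normality for the $f$ built as in that proof (an $f$ depending only on input length that converges to the real faster than $b^{-n}$), so any such $u+v$ works as the non-supernormal sum.

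Concretely: write $\overline\Omega = 0.w_1 w_2 w_3\cdots$. Choose a $2$-random $r = 0.r_1 r_2\cdots$. On the even positions $2k$ put $r_k$ into $u$ and $w_{2k} \oplus r_k$ (bit XOR won't sum correctly — use instead: $u$ gets $r_k$ and $v$ gets $w_{2k}$ with borrow/carry bookkeeping). To avoid carrying headaches, do it base-$4$ or just pad: let $u$ have digit sequence $0\,a_1\,0\,a_2\,0\,a_3\cdots$ and $v$ have $0\,b_1\,0\,b_2\,0\,b_3\cdots$ where in each even block $a_i + b_i = w_i'$ is a single digit of $\overline\Omega$ read in a base large enough that $a_i,b_i\in\{0,1\}$ never produces a carry. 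Then $u+v$ has expansion $0\,w_1'\,0\,w_2'\cdots$, which is still approximable from above (the zeros in odd positions are harmless), hence not supernormal. Now choose the $a_i$ to be the bits of a $2$-random real; then $u$ has $2$-$\dim(u)=1$ (long zero blocks only on a sparse, structured set, and $2$-$\dim$ computes the $\liminf$ over all $n$, so I need the zero-padding sparse enough that it does not pull the $\liminf$ below $1$ — taking the blocks of geometrically increasing length, as above, ensures the fraction of padding goes to $0$), so by the earlier proposition $u$ is supernormal; and $b_i = w_i' - a_i$: here is the catch, $v$ must also be supernormal, but $w_i'$ is only approximable from above and $a_i$ is random, so $b_i$ need not be random.

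**The main obstacle, and the fix.** The real difficulty is arranging both summands to be supernormal simultaneously while their sum is a \emph{prescribed} non-supernormal real; controlling one summand is easy, controlling the difference of the other from a bad real is the problem. I would resolve this by \emph{not} prescribing the sum exactly but only prescribing that it be approximable from above, and using a symmetric construction: build $u$ and $v$ together by a finite-injury-style argument (parallel to the proof that $\Omega$ is supernormal) that simultaneously (i) diagonalizes against all pairs $(D,\epsilon)$ of transducers and rationals to keep both $u$ and $v$ weakly $f$-normal for every universal upper semi-computable $f$, while (ii) maintains $u+v$ as a monotone-decreasing approximable real by only ever \emph{decreasing} both $u$ and $v$ in a coordinated way. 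Since the supernormality requirements only ever force small (summably bounded, as in the $\Omega$ proof) \emph{adjustments} and we have two reals' worth of freedom, there is enough room to keep $u+v$ left-approximable-from-above throughout; then $u+v$ is approximable from above and hence, by the argument in Proposition \ref{complement of Omega is not SN} (the length-dependent fast-converging $f$), not supernormal. I expect the bookkeeping — showing the two requirement-streams do not conflict and the total perturbation to $u+v$ stays monotone — to be the technical heart, closely modeled on the $\Omega$-is-supernormal construction already in the paper.
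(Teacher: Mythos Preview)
You have dramatically overcomplicated this. The paper's proof is one line: take any $2$-random real $x$, set $y=-x$; both are $2$-random hence supernormal, and $x+y=0$ is rational, in particular approximable from above, hence not supernormal. That is the entire argument.

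Your proposal, by contrast, never considers letting the sum be $0$ (or any rational at all), and instead tries to force the sum to be a specific nontrivial approximable-from-above real like $\overline{\Omega}$. This creates artificial difficulties. Concretely: in your ``pad with zeros'' construction you take $u$ with expansion $0\,a_1\,0\,a_2\,0\,a_3\cdots$. But then exactly half the digits of $u$ are $0$, so $u$ is not even normal to the base in question, let alone supernormal; your remark about ``long zero blocks only on a sparse, structured set'' does not match the construction you actually wrote down, where the zeros occupy every other position. The earlier bit-splitting idea has the same defect (you yourself note it). Your final fallback, a finite-injury construction ``modeled on the $\Omega$-is-supernormal proof'' that keeps $u+v$ monotone decreasing while diagonalizing both $u$ and $v$ against all upper semi-computable $f$, is at best a sketch of a research programme: you give no indication how to make the adjustments to $u$ and $v$ compatible with monotonicity of their sum while also meeting all supernormality requirements for both.

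The moral: before reaching for a priority argument, check whether a trivial witness (here, $y=-x$) already does the job.
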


\begin{proof}
    Take some 2-random real $x$.  Then $-x$ is also 2-random, and thus they are both supernormal.  Now $x + (-x) = 0$ is certainly approximable from above.
\end{proof}

\begin{prop} There are $x,y$, each not supernormal, such that $x+y$ is supernormal.\end{prop}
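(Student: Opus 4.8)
The plan is to exploit the flexibility already demonstrated in the previous proposition: supernormality is not closed under addition because cancellation can produce a too-simple number, but we can run this logic in reverse. Concretely, I would take a single supernormal real $z$ (e.g., a $2$-random, which exists and is supernormal by the propositions above) and split it as $z = x + y$ where each of $x,y$ is engineered to fail weak $f$-normality for some upper semi-computable $f$, yet their sum recovers $z$. The natural candidate is an ``interleaving'' split: write the binary expansion of $z$ as $z = \sum_i z_i 2^{-i}$ and distribute the bits of $z$ into two disjoint infinite arithmetic-progression-like sets $A, B \subseteq \mathbb{N}$ with $A \cup B = \mathbb{N}$, $A \cap B = \emptyset$, putting $x = \sum_{i \in A} z_i 2^{-i}$ and $y = \sum_{i \in B} z_i 2^{-i}$, so that $x + y = z$ with no carrying (one must be slightly careful about carries, but choosing $A, B$ to alternate in long blocks, and adjusting the construction so the relevant sums never collide, handles this; alternatively work with the $z_i$ directly in a base $b > 2$ representation to avoid carry issues entirely).

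The key point is then to show each of $x$ and $y$ is \emph{not} supernormal, i.e., not weakly $f$-normal for some upper semi-computable (indeed computable) $f$. Suppose the block of indices assigned to $B$ immediately following position $2^k$ has length $k^2$ (mirroring the $0^{k^2}$-padding trick used earlier in the excerpt for the ``supernormal but not $1$-random'' example). Then for $x = \sum_{i \in A} z_i 2^{-i}$, the stretch of positions assigned to $B$ contributes nothing, so $x$ has long runs of zeros at computably-locatable positions; a computable $f$ that, on input $\sigma$ of length $n$, simply reads off the first $n$ bits of its argument but ``knows'' to skip the predictable zero-blocks, witnesses $C^f_n(x) < (1-\epsilon)n$ for infinitely many $n$. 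The same reasoning, with roles of $A$ and $B$ swapped and a dual padding pattern, shows $y$ is not weakly $f$-normal for a corresponding computable $f$. Since failure of weak $f$-normality for even one upper semi-computable $f$ contradicts condition (2) of Theorem \ref{supernormequiv}, neither $x$ nor $y$ is supernormal.

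Finally I would verify $x + y = z$ is genuinely supernormal: since $z$ was chosen to be $2$-random (or, more economically, to have $2$-$\dim(z) = 1$), the earlier proposition gives supernormality directly. The main obstacle I anticipate is the carry problem in binary addition: the zero-padding that makes $x$ and $y$ simple must be arranged so that reassembling $z$ from $x$ and $y$ does not introduce carries that destroy the $2$-randomness (or dimension) of $z$, or conversely so that a genuinely random $z$ can be split this way at all. The cleanest fix is to do everything in a base $b \geq 3$: represent the target supernormal $z$ in base $b$, split each digit region between two index sets, pad one summand's region with $0$'s and leave it in the other, so that digit-wise addition never exceeds $b-1$ and no carry occurs; supernormality of $z$ in base $b$ is equivalent to supernormality as defined (via Proposition \ref{mult} and the base-independence built into the definition), and the simplicity arguments for $x, y$ go through verbatim. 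Alternatively, one avoids the issue by noting that even in base $2$ one can choose the padding blocks to lie strictly inside digit positions where $z$ already has, say, the pattern guaranteeing no carry on reassembly — but the base-$b$ route is less fussy.
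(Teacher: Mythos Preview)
Your approach is essentially the paper's: take a supernormal $z$, distribute its binary digits between two disjoint index sets to form $x$ and $y$ with $x+y=z$, and argue that each summand fails to be supernormal. Two remarks, though.

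First, your worry about carries is misplaced. If $A,B$ partition $\mathbb{N}$ and you set $x=\sum_{i\in A}z_i2^{-i}$ and $y=\sum_{i\in B}z_i2^{-i}$, then at every bit position exactly one summand contributes, so $x+y=z$ holds on the nose with no carrying whatsoever, in base $2$ or any other base. The detour through base $b\ge 3$ or through carefully placed blocks is unnecessary.

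Second, the paper takes the simplest possible split --- $A$ the even indices, $B$ the odd --- and gives a one-line argument for non-supernormality: each of $x,y$ has a $0$ in every other binary position, so neither is normal to base $2$, hence neither is absolutely normal, hence (by the corollary that supernormal implies absolutely normal) neither is supernormal. Your compressibility argument via a tailored computable $f$ is valid, but it is more machinery than the statement requires.
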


\begin{proof} Let $z = \sum\limits_{i \in \mathbb{N}} z_i 2^{-i}$ be supernormal, let \[x = \sum\limits_{i \in \mathbb{N}} z_{2i}2^{-2i},\] and \[y = \sum\limits_{i \in \mathbb{N}} z_{2i+1}2^{-(2i+1)}.\]  Since $x$ and $y$ are not normal, they are not supernormal.\end{proof}

\begin{prop}
    Supernormal numbers are not closed under limits, i.e., there exists a sequence $(n_k)_{k\in\omega}$ of supernormal numbers such that $\lim\limits_{k\to\infty} n_k=n$ is not supernormal. 
\end{prop}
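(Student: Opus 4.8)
The plan is to exploit density: since \emph{every} $2$-random real is supernormal and the $2$-random reals have Lebesgue measure $1$, they are dense in $\real$, so every nonempty open interval contains a supernormal number. Consequently any real that is \emph{not} supernormal is a limit of supernormal numbers, and it only remains to name one non-supernormal target together with a sequence of supernormal numbers converging to it.

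Concretely, first I would fix a real $n$ with $\dim(n) < 1$. The simplest choice is a rational such as $n = \tfrac12$ (whose binary expansion is eventually constant, so $\dim(n) = 0$); if one prefers a less degenerate witness, one may instead take $n$ to be a computable absolutely normal number as produced in \cite{Turi13}. In either case $\dim(n) < 1$, so by Proposition \ref{snisrandom} the real $n$ is not supernormal. Next, for each $k \in \omega$, I would use that the $2$-random reals form a measure-$1$, hence dense, set to choose a $2$-random real $n_k$ with $|n_k - n| < 2^{-k}$. Each such $n_k$ is supernormal (by the proposition that every $2$-random real is supernormal), and $n_k \to n$ as $k \to \infty$. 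Since $n$ is not supernormal, the sequence $(n_k)_{k\in\omega}$ witnesses that the supernormal numbers are not closed under limits.

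I do not expect a genuine obstacle here: the only facts needed are that the $2$-random reals have full Lebesgue measure (standard) and are therefore dense, which is exactly what guarantees the approximating sequence $(n_k)$ exists, together with the two already-established propositions (every $2$-random real is supernormal; supernormality implies $\dim = 1$). If one wishes to avoid even invoking measure-theoretic density, one could replace the $n_k$ by finite binary modifications of a single fixed $2$-random real, chosen to agree with $n$ on longer and longer initial segments while remaining $2$-random off a tail — but the density argument above is cleaner and suffices.
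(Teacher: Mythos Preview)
Your argument is correct, but the paper takes a different and more explicit route: fix any supernormal $x$ and set $n_k = 2^{-k}x$; by Proposition~\ref{mult} (closure under multiplication by positive rationals) each $n_k$ is supernormal, while $n_k \to 0$, which has dimension $0$ and hence is not supernormal by Proposition~\ref{snisrandom}. Your approach trades the use of Proposition~\ref{mult} for the density of the $2$-random reals, which has the advantage of being entirely soft and of working for \emph{any} non-supernormal target; the paper's approach, by contrast, yields a completely explicit sequence from a single supernormal input and ties the proposition back to the earlier closure property. Both proofs are short and either would serve.
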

\begin{proof}
    Let $x$ be a supernormal, then define the sequence $\{n_k\}_{k\in\mathbb{N}}$ by
    $n_k = x2^{-k}$. Each $n_k$ is supernormal by Proposition \ref{mult}, but $\lim\limits_{k\to\infty} n_k=0$ and $0$ is not supernormal.
\end{proof}


\section{Highly Normal Numbers}\label{sec:highlynormal}



We now turn to our attention to a different class of numbers, the highly normal numbers, which are obtained when we require weak (and strong) $f$-normality for total computable functions $f$ as opposed to upper semi-computable functions.

\begin{dfn} We say that a real $x$ is \textit{highly normal} if and only if it is strongly $f$-normal with respect to all total computable $f$.\end{dfn}

{\color{blue} For highly normal numbers, we have a similar result to Theorem \ref{raro1}, though there are no conditions analogous to ($3$) and ($4$) from Theorem \ref{raro1} mentioning a universal total computable function $f$ because one does not exist.}

\begin{thm}\label{highlynormequiv}The following properties of a real $x$ are equivalent.
\begin{enumerate}
    \item For all total computable functions $f$, the real $x$ is strongly $f$-normal.
    \item For all total computable functions $f$, the real $x$ is weakly $f$-normal.
\end{enumerate}
\end{thm}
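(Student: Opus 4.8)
The plan is to prove Theorem~\ref{highlynormequiv} by mimicking the structure of the proof of Theorem~\ref{supernormequiv}, but working with the class of total computable functions in place of the upper semi-computable ones. The implication $(1) \Rightarrow (2)$ is immediate and identical to the corresponding step there: if $x$ is strongly $f$-normal for every total computable $f$, then in particular, applying the definition with the identity transducer $I$ (for which $C_I(\sigma) = |\sigma|$, hence $C^f_{n,I}(x) = C^f_n(x)$), we get that $x$ is weakly $f$-normal for every total computable $f$.

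The substantive direction is $(2) \Rightarrow (1)$, and here the natural approach is the contrapositive combined with the composition trick from Theorem~\ref{supernormequiv}. Suppose $x$ is not strongly $f$-normal for some total computable $f$. Then there is a finite state transducer $D$ and an $\epsilon > 0$ such that $C^f_{n,D}(x) < (1-\epsilon)n$ for infinitely many $n$. The key observation is that $g := f \circ D$ is again \emph{total computable}, since $D$ is a (total) finite-state transducer and $f$ is total computable --- this is exactly the point where the argument is cleaner than in the upper semi-computable case, because we do not need to worry about preserving an approximation-from-above structure, only totality and computability, both of which are obviously closed under composition with a finite-state transducer. Then, exactly as in the computation displayed in the proof of Theorem~\ref{supernormequiv},
\[
C^g_n(x) = \min\{|\tau| : |g(\tau) - x| < b^{-n}\} \cup \{n+1\} = \min\{C_D(\sigma) : |f(\sigma)-x| < b^{-n}\}\cup\{n+1\} = C^f_{n,D}(x),
\]
where one uses that the infimum over $\tau$ of $|\tau|$ with $D(\tau) = \sigma$ is precisely $C_D(\sigma)$, and that every output $\sigma$ of $D$ arises as $D(\tau)$ for some $\tau$. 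Hence $C^g_n(x) < (1-\epsilon)n$ for infinitely many $n$, so $x$ is not weakly $g$-normal for the total computable function $g$, contradicting $(2)$.

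One subtlety to address carefully: in the $b=2$ display above I should keep the base $b$ consistent with the definitions (the paper's general $C^f_n$ and $C^f_{n,D}$ are stated with the threshold $b^{-n}$), and note that $C_D(\sigma) \le |\sigma|$ always, so nothing is lost in the identification of $C^g_n$ with $C^f_{n,D}$. I expect no real obstacle here: unlike Theorem~\ref{supernormequiv}, there is no "universal" total computable function to pass through (indeed there is none, since a universal total computable function would decide the halting problem), so the statement only asserts the equivalence of the two "for all $f$" quantifications, and the composition argument handles this directly. The main thing to be careful about is simply verifying that $f \circ D$ is well-defined and total computable --- which follows because a finite-state transducer reads its input left to right and always halts with a finite output string, so $D : \Sigma_b^{<\omega} \to \Sigma_b^{<\omega}$ is total computable, and composition with the total computable $f : \Sigma_b^{<\omega} \to \rat$ stays in the class.
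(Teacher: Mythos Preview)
Your proof is correct and follows essentially the same approach as the paper's: both directions use the identity transducer for $(1)\Rightarrow(2)$ and the composition $g = f \circ D$ for the contrapositive of $(2)\Rightarrow(1)$, with the same displayed computation identifying $C_n^g(x)$ with $C_{n,D}^f(x)$. One small slip worth correcting: the aside that ``$C_D(\sigma) \le |\sigma|$ always'' is false in this paper's setup (the definition gives $C_D(\sigma)=\infty$ when $\sigma$ is not a $D$-output), but this is harmless since your displayed equality holds directly without that claim.
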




\begin{proof}
  
    We first show that ($1$) implies ($2$). Suppose for all computable functions $f$, the real $x$ is strongly $f$-normal, i.e., for all finite state machines $D$ and $\epsilon$ and for almost all $n$,
    \[
    C_{n,D}^f(x)\geq (1-\epsilon)n.
    \]
    So, in particular, this holds for the identity automata $I$ where $C_I(\sigma)=|\sigma|$. So, for an arbitrary total computable function $g$ and for $I$,
    \[
        C_{n,I}^g(x)\geq (1-\epsilon)n,
    \]
    and moreover, $C_{n,I}^g(x)=C_n^g(x)$. Hence, $G_n^g(x)\geq (1-\epsilon)n$, and since $g$ was arbitrary, we have shown ($2$).


    Suppose $x$ is not strongly $f$-normal for some computable function $f$. So, there exists a finite state machine $D$ and $\epsilon$ such that there exists an $n$ where
    \[
    C_{n,D}^f(x)<(1-\epsilon)n.
    \]

    Let $g=f\circ D$, then because $D$ is a finite state machine and $f$ is computable, $g$ is a computable function. By the same argument for ($2$) implies ($3$) in Theorem \ref{raro1}, we have that $C_n^g(x)\leq C_{n,D}^f$, and so $C_n^g(x)<(1-\epsilon)n$ and thus $x$ is not weakly $g$-normal for some computable function $g$.

\end{proof}

As before, we note that every highly normal number is absolutely normal. Moreover, every supernormal is highly normal, and this implication is strict.


The following theorem is important in that it gives a novel characterization of the reals with computable dimension 1.  To our taste, the definition of highly normal numbers is more natural than the standard definition of computable dimension 1.

\begin{thm}\label{dim1isHN}  A real $x$ is highly normal if and only if $\dim_{\comp}(x)=1$.\end{thm}

\begin{proof} 
Assume that $\dim_{\comp}(x)<1$. Then $K_M(x[1..n])< (1-\epsilon)n$ for some always halting Turing machine $M$, $\epsilon>0 $ and almost all $n$. 
Define $f=f_M$ a computable function. Then $x$ is not $f$-weakly normal and therefore $x$ is not highly normal.

For the other direction, suppose that $f$ is a computable function and $x$ is not weakly $f$-normal. Then there exists an $\epsilon>0$ and for infinitely many $n$, we have $C^f_n(x)< (1-\epsilon)n$. So there is a $\sigma$ such that $|\sigma|<(1-\epsilon)n$ where $|f(\sigma)-x|< 2^{-n}$. Hence, for an always halting Turing machine $M$, we have \[K_M(x[1..n])<(1-\epsilon)n+c+2+2\log n,\] where $c$ is the length of a program computing $f$, and we use two extra bits to decide on $f(\sigma)[1..n]$, its successor, or its predecessor. So,
\begin{align*}
    \liminf_{n\to\infty}\frac{K_M(x[1..n])}{n}&<\liminf_{n\to\infty}\frac{(1-\epsilon)n+c+2+2\log n}{n}=1-\epsilon.
\end{align*}
It follows that $\dim_{\comp}(x)<1$.

\end{proof}

\begin{cor}
    There is a highly normal number $x$ which is not supernormal.
\end{cor}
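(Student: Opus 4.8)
The goal is to separate the two classes: produce a highly normal $x$ that is not supernormal. By Theorem~\ref{dimension 1 implies HN}, "highly normal" means $\dim_{\comp}(x)=1$, and by Proposition~\ref{snisrandom} (together with the relativized result just before Proposition~\ref{complement of Omega is not SN}) the obstruction to supernormality can be arranged via $\emptyset'$-complexity: if $\dim^{\emptyset'}(x)<1$ then there is some hope $x$ fails to be supernormal, but more directly, the proof of Proposition~\ref{complement of Omega is not SN} shows that $\overline{\Omega}$ — a specific $\emptyset'$-computable-from-above real — is not supernormal. So the cleanest route is: find a single real that is simultaneously of computable dimension $1$ and not supernormal. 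Since $\overline{\Omega}$ is already known not to be supernormal, the question is whether $\overline{\Omega}$ (equivalently $\Omega$, using Proposition~\ref{mult}-style reflection, though reflection through $x\mapsto 1-x$ needs a moment's thought) has computable dimension $1$. But $\Omega$ is $1$-random, hence of dimension $1$, hence certainly of computable dimension $1$ (computable dimension dominates dimension). And $\overline\Omega = 1-\Omega$ is also $1$-random, so $\dim_{\comp}(\overline\Omega)=1$ as well. Thus $\overline{\Omega}$ is highly normal but not supernormal, and we are done.

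**Key steps, in order.** First, recall from Proposition~\ref{complement of Omega is not SN} that $\overline{\Omega}$ is not supernormal — this is cited, not reproved. Second, observe $\overline{\Omega}=1-\Omega$ is $1$-random (randomness is preserved under the computable homeomorphism $x\mapsto 1-x$ of $[0,1]$, or equivalently under bitwise complementation up to carrying). Third, since every $1$-random real $y$ satisfies $K(y[1..n]) \geq n - O(1)$, we get $\dim(y)=1$, and because for every always-halting machine $M$ we have $K_M \geq K - O(1)$ (as $M$ is in particular a machine $U$ could simulate), $\dim_{\comp}(y) \geq \dim(y) = 1$; since $\dim_{\comp}\le 1$ always, $\dim_{\comp}(\overline{\Omega})=1$. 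Fourth, invoke Theorem~\ref{dimension 1 implies HN} to conclude $\overline{\Omega}$ is highly normal. Combining steps one and four gives the corollary.

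**Alternative, if one prefers to avoid $\overline\Omega$.** One could instead take any $1$-random real that is not supernormal; Proposition~\ref{complement of Omega is not SN} guarantees at least one exists, and the argument of steps two–four applies verbatim to show any $1$-random real is highly normal. Indeed this gives the slightly stronger statement that "$1$-random" lies strictly between "highly normal" and "supernormal." Another packaging: supernormal $\Rightarrow$ $\dim(x)=1$ but highly normal only gives $\dim_{\comp}(x)=1$, so any highly normal real with classical dimension $<1$ would also work — but exhibiting such a real directly is more work than citing $\overline\Omega$, so I would not take that route.

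**Main obstacle.** There is essentially no obstacle here; the real content was in Proposition~\ref{complement of Omega is not SN} and Theorem~\ref{dimension 1 implies HN}, both already established. The only point requiring a line of care is confirming that $\overline{\Omega}$ is genuinely $1$-random — i.e., that passing from $\Omega$ to $1-\Omega$ (with the attendant binary carrying) does not destroy Martin-Löf randomness — but this is a standard fact about randomness being invariant under rational translation and reflection, so a one-sentence justification suffices.
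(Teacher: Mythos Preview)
Your proposal is correct and follows essentially the same route as the paper: take $x=\overline{\Omega}$, invoke Proposition~\ref{complement of Omega is not SN} to see it is not supernormal, and observe that $\overline{\Omega}$ has (computable) dimension $1$ so that Theorem~\ref{dimension 1 implies HN} makes it highly normal. Your write-up simply fills in details the paper leaves implicit (that $\overline{\Omega}$ is $1$-random, and that $\dim(x)=1$ forces $\dim_{\comp}(x)=1$).
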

\begin{proof}
$\overline{\Omega}$ is has effective dimension $1$ and so by Theorem \ref{dim1isHN}, it is highly normal and by Proposition \ref{cOmeganotSN}, it is not supernormal.
\end{proof}




The following property of the class of highly normal numbers {\color{blue} directly follows from  Theorem \ref{dim1isHN}.}

\begin{prop}
    Highly normal numbers are closed under complement.
\end{prop}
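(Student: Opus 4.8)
The plan is to prove the statement directly by a one-line change of representation, leveraging the single feature that separates the highly normal numbers from the supernormal ones: the total computable functions $\Sigma_b^{<\omega}\to\rat$ are closed under $f\mapsto 1-f$, whereas the upper semi-computable functions are not. Throughout, the \emph{complement} of a real $x\in[0,1]$ will mean $1-x$; since a highly normal real is absolutely normal, hence irrational, this coincides with the bitwise complement of its binary expansion, so no dyadic-endpoint subtlety arises and I need not worry about which convention the word ``complement'' refers to.

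So I would take $x$ highly normal, fix an arbitrary base $b$ and an arbitrary total computable $g:\Sigma_b^{<\omega}\to\rat$, and aim to show that $1-x$ is weakly $g$-normal; by Theorem \ref{highlynormequiv} this suffices to conclude $1-x$ is highly normal. The move is to set $f(\sigma)=1-g(\sigma)$, which is again a total computable function into $\rat$, so that $x$ is weakly $f$-normal by hypothesis. The key identity is
\[
|f(\sigma)-x| \;=\; |\,1-g(\sigma)-x\,| \;=\; |\,g(\sigma)-(1-x)\,|,
\]
from which $C^f_n(x)=C^g_n(1-x)$ for every $n$: the two minima are taken over exactly the same set of strings $\sigma$, and the ``$\cup\{n+1\}$'' clause is identical in both. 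Hence for every $\epsilon>0$ and almost every $n$,
\[
C^g_n(1-x) \;=\; C^f_n(x) \;\ge\; (1-\epsilon)n,
\]
so $1-x$ is weakly $g$-normal; as $g$ and $b$ were arbitrary, $1-x$ is highly normal.

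I do not expect a genuine obstacle here — the whole content is the observation that $f\mapsto 1-f$ preserves computability, plus the routine bookkeeping that $C^f_n$ and $C^g_n$ literally range over the same strings. The one point worth flagging in the write-up is that this argument is deliberately \emph{not} available for supernormality: replacing an upper semi-computable $g$ by $1-g$ yields a lower, not upper, semi-computable function, which is exactly what makes it consistent that $\Omega$ is supernormal while $\overline{\Omega}$ is not (Proposition \ref{complement of Omega is not SN}). It may be worth adding a sentence to this effect so the reader sees why the hypothesis ``computable'' rather than ``upper semi-computable'' is doing real work.
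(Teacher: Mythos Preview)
Your proof is correct and is essentially the same as the paper's: both hinge on the observation that $f\mapsto 1-f$ is a bijection on total computable functions and that $|f(\sigma)-x|=|(1-f(\sigma))-(1-x)|$ forces $C^f_n(x)=C^{1-f}_n(1-x)$. Your presentation is in fact slightly cleaner, since you fix the target function $g$ first and then pull back to $f=1-g$, making the universal quantifier over $g$ explicit; the added remark contrasting with the supernormal case is a nice touch.
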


\bmhead{Acknowledgements}

The authors acknowledge the support of the 5-day workshop 23w5055 held at Banff International Research Station where this project was initiated.  The third author's work was supported in
part by Spanish Ministry of Science and Innovation grants
PID2019-104358RB-I00 and  PID2022-138703OB-I00  and by the Science dept. of Aragon Government:
Group Reference T64$\_$20R (COSMOS research group).
The fifth author's work was partially supported by a Focused Research Group grant from the National Science Foundation of the United States, DMS-1854355.  The authors are grateful to Joseph Miller for helpful conversations about this project.



\end{document}